\title{Spaces of real polynomials with common roots}
\author{Yasuhiko Kamiyama}
\address{Department of Mathematics\\
University of the Ryukyus\\\newline
Nishihara-Cho\\
Okinawa 903-0213\\
Japan}
\email{kamiyama@sci.u-ryukyu.ac.jp}
\urladdr{}
\def\cnewtheorem#1[#2]#3{\newtheorem{#1}{#3}[section]
\expandafter\let\csname c@#1\endcsname\c@thm}
\let\xysavmatrix\xymatrix
\def\xymatrix{\disablesubscriptcorrection\xysavmatrix}
\theoremstyle{plain}
\newtheorem{thm}{Theorem}[section]
\newtheorem{prop}[thm]{Proposition}
\newtheorem{alphthm}{Theorem}
\newtheorem{alphprop}{Proposition}
\let\c@alphprop\c@alphthm
\theoremstyle{definition}
\newtheorem{exmp}{Example}[section]
\numberwithin{equation}{section}
\newcommand{\bv}[2]{\bigvee_{#1}^{#2}}
\newcommand{\bvs}[2]{\underset{#2}{\underset{#1}{\bigvee}}} 
\renewcommand{\C}{\mathbb{C}}
\newcommand{\CP}[1]{\C P^{#1}}
\renewcommand{\D}[2]{D_{#1} (S^{#2})}
\newcommand{\Dx}[3]{D_{#1} \xi^{#2} (S^{#3})}
\newcommand{\Ga}[2]{\left[ \frac{#1}{#2} \right]}
\newcommand{\Map}[2]{\text{\rm Map}_{#1}^T (\CP{1}, \CP{#2})}
\renewcommand{\mod}{\, \text{\rm mod} \,}
\newcommand{\ol}[1]{\overline{#1}}
\renewcommand{\R}[2]{\text{\rm Rat}_{#1} (\CP{#2})}
\renewcommand{\Re}{\mathbb{R}}
\newcommand{\RP}[1]{\Re P^{#1}}
\newcommand{\RX}[3]{RX_{#1, #3}^{#2}}
\newcommand{\simeqs}{\underset{s}{\simeq}}
\newcommand{\SP}[1]{\text{\rm SP}^{#1}}
\renewcommand{\TH}{\widetilde{H}}
\newcommand{\X}[3]{X_{#1, #3}^{#2}}
\newcommand{\Y}[3]{Y_{#1, #3}^{#2}}
\newcommand{\Zp}{\mathbb{Z}/p}
\newcommand{\mapright}[1]{\smash{\mathop{
\hbox to 1cm{\rightarrowfill}}\limits^{#1}}}
\begin{document}

\begin{htmlabstract} 
Let RX<sub>k,n</sub><sup>l</sup> be the space consisting of all
(n+1)&ndash;tuples (p<sub>0</sub>
(z),&hellip;,p<sub>n</sub>(z)) of monic polynomials over <b>R</b> of degree
k and such that there are at most l roots common to all p<sub>i</sub>(z).
In this paper, we prove a stable splitting of RX<sub>k,n</sub><sup>l</sup>.
\end{htmlabstract}

\begin{abstract} 
Let $RX_{k,n}^{l}$ be the space consisting of all $(n+1)$--tuples $(p_0
(z), \ldots, p_n (z))$ of monic polynomials over $\mathbb{R}$ of degree
$k$ and such that there are at most $l$ roots common to all $p_i (z)$.
In this paper, we prove a stable splitting of $RX_{k,n}^{l}$.
\end{abstract}

\maketitle

\section{Introduction} 
\label{sec:sec1}

Let $\R{k}{n}$ denote the space of based holomorphic maps of degree $k$ 
from the Riemannian sphere $S^2= \C \cup \infty$ to the complex projective 
space $\CP{n}$. 
The basepoint condition we assume is that $f(\infty) = [1, \dots, 1]$. 
Such holomorphic maps are given by rational functions:
\begin{multline*}
\R{k}{n}  =  \bigl\{ (p_0 (z), \dots, p_n (z)): 
\text{each $p_i(z)$ is a monic polynomial over $\C$}\\
\text{of degree $k$ and such that there are no roots common to all 
$p_i (z)$} \bigr\}.
\end{multline*}
There is an inclusion $\R{k}{n} \hookrightarrow \Omega_k^2 \CP{n} \simeq 
\Omega^2 S^{2n+1}$. Segal \cite{S} proved that the inclusion 
is a homotopy equivalence up to dimension $k (2n-1)$. 
Later, the stable homotopy type of $\R{k}{n}$ was described by
Cohen et al \cite{CCMM1,CCMM2} as follows. Let $\Omega^2 S^{2n+1} \simeqs 
\bv{1 \leq q}{}
\D{q}{2n-1}$ be Snaith's stable splitting of 
$\Omega^2 S^{2n+1}$. 
Then 
\begin{equation}
\R{k}{n} \simeqs \bv{q=1}{k} \D{q}{2n-1}. 
\label{eqn:eqn1.1}
\end{equation}
In Kamiyama \cite{K}, \eqref{eqn:eqn1.1} was generalized as follows. We set 
\begin{multline*}
\X{k}{l}{n} =  \bigl\{ (p_0 (z), \dots, p_n (z)): 
\text{each $p_i(z)$ is a monic polynomial over $\C$}\\
\text{of degree $k$ and such that there are at most $l$ 
roots common to all $p_i (z)$} \bigr\}.
\end{multline*}
In particular, $\X{k}{0}{n} = \R{k}{n}$. 
Let 
$$J^l (S^{2n}) \simeq S^{2n} \cup e^{4n} \cup e^{6n} \cup \ldots \cup 
e^{2ln} \subset \Omega S^{2n+1}$$ 
be the $l$th stage of 
the James filtration of $\Omega S^{2n+1}$, and let $W^l (S^{2n})$ be the 
homotopy theoretic fiber of the inclusion $J^l (S^{2n}) \hookrightarrow 
\Omega S^{2n+1}$. We generalize Snaith's stable splitting of 
$\Omega^2 S^{2n+1}$ as follows: 
$$W^l (S^{2n}) \simeqs \bv{1 \leq q}{} \Dx{q}{l}{2n}.$$
Then we have a stable splitting 
\begin{equation*}
\X{k}{l}{n} \simeqs \bv{q=1}{k} \Dx{q}{l}{2n}. 
\end{equation*}
The purpose of this paper is to study the real part $\RX{k}{l}{n}$ of 
$\X{k}{l}{n}$ and prove a stable splitting of this. More precisely, 
let $\RX{k}{l}{n}$ be the subspace of $\X{k}{l}{n}$
 consisting of elements 
$(p_0(z), \dots, p_n(z))$ such that each $p_i(z)$ has real coefficients.
Our main results 
will be stated in \fullref{sec:sec2}. Here we give a theorem which generalizes 
\eqref{eqn:eqn1.1}. 
Since the homotopy type of $\RX{k}{0}{1}$ is known 
(see \fullref{eg:eg2.1} (iii)), 
we assume $n \geq 2$. In this case, there is an inclusion 
$$\RX{k}{0}{n} \hookrightarrow \Omega S^n \times \Omega^2 S^{2n+1}.$$ 
(See \fullref{lem:lem3.1}.)
\begin{thm} \label{thm:thm1.1} For $n \geq 2$, we define the weight of 
stable summands in $\Omega S^n$ as usual, but those in $\Omega^2 S^{2n+1}$
we define as being twice the usual one. 
Then $\RX{k}{0}{n}$ is stably homotopy equivalent 
to the collection of stable summands in 
$\Omega S^n \times \Omega^2 S^{2n+1}$ of weight $\leq k$. Hence, 
$$\RX{k}{0}{n} \simeqs \bv{p+2q \leq k}{} \Sigma^{p (n-1)} \D{q}{2n-1} 
\vee \bv{p=1}{k} S^{p (n-1)}.$$
\end{thm}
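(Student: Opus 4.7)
The plan is to use the unique factorization of a real monic polynomial into real linear factors and real irreducible quadratic factors to build a configuration-space model of $\RX{k}{0}{n}$. Each real root of one of the $p_i$ contributes a point of $\Re$, while each complex-conjugate pair of roots contributes a single point of the open upper half plane $\C^+ \cong \Re^2$; both kinds of points carry a label that records the multiplicities across $p_0, \ldots, p_n$. This realises $\RX{k}{0}{n}$ as a space of labelled configurations on $\Re \sqcup \C^+$, and the two components of the inclusion of \fullref{lem:lem3.1} into $\Omega S^n \times \Omega^2 S^{2n+1}$ are precisely the scanning maps of the two regions: the $\Re$-configurations yield points of $\Omega S^n$ via May's little $1$-cubes model with labels in $\CP{n}$, and the $\C^+$-configurations yield points of $\Omega^2 S^{2n+1}$ via the Segal construction on $\R{k}{n}$.

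Next, I would assign weight $1$ to each real root and weight $2$ to each complex-conjugate pair, so that the degree-$k$ condition on the polynomials becomes total weight $\leq k$. The James splitting $\Omega S^n \simeqs \bv{p \geq 1}{} S^{p(n-1)}$ makes $S^{p(n-1)}$ a weight-$p$ summand, and Snaith's splitting $\Omega^2 S^{2n+1} \simeqs \bv{q \geq 1}{} \D{q}{2n-1}$ makes $\D{q}{2n-1}$ a weight-$2q$ summand. Applying the standard stable splitting $X \times Y \simeqs X \vee Y \vee (X \wedge Y)$ to $X = \Omega S^n$ and $Y = \Omega^2 S^{2n+1}$ then decomposes the product into pure James summands $S^{p(n-1)}$, pure Snaith summands $\D{q}{2n-1}$, and mixed summands $\Sigma^{p(n-1)} \D{q}{2n-1}$ of total weight $p+2q$; retaining only those of total weight $\leq k$ reproduces the right-hand side of the theorem exactly.

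The remaining task is to upgrade the inclusion of \fullref{lem:lem3.1} to a stable equivalence onto its weight $\leq k$ image. Following the strategy of Cohen et al \cite{CCMM1,CCMM2} for $\R{k}{n}$, I would equip $\RX{k}{0}{n}$ with its induced weight filtration, identify each filtration quotient with the corresponding wedge summand through an explicit collapse, and assemble these identifications into a stable splitting via the Cohen combinatorial model. The main obstacle is to verify that the $\Re$- and $\C^+$-configurations behave independently, that is, that the labelled configuration space modelling $\RX{k}{0}{n}$ splits stably as the appropriate smash-and-wedge combination of a real-root and a complex-pair factor; once this independence is in hand, the James filtration on the first factor and the Snaith--Cohen--Taylor filtration on the second can be combined factor-by-factor and truncated at total weight $k$ to yield the claimed wedge.
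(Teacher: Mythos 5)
Your bookkeeping is correct: the weight conventions, the identification of the target with the weight $\leq k$ part of $\Omega S^n \times \Omega^2 S^{2n+1}$, and the way the James summands $S^{p(n-1)}$, the Snaith summands $\D{q}{2n-1}$, and the mixed summands $\Sigma^{p(n-1)}\D{q}{2n-1}$ arise from $X \times Y \simeqs X \vee Y \vee (X \wedge Y)$ all match the statement. But the step you defer as ``the main obstacle'' --- the independence of the $\Re$--configurations and the $H_+$--configurations --- is not a verification to be postponed; it is the entire content of the theorem, and your model is not set up so that it could hold. The partition of the roots of $(p_0, \dots, p_n)$ into real roots and conjugate pairs is not locally constant on $\RX{k}{0}{n}$: a pair $\{\alpha, \ol{\alpha}\}$ with $\alpha \in H_+$ can move onto the real axis and become a real double root of a single $p_i$ without ever violating the no-common-root condition. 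So $\RX{k}{0}{n}$ is not a labelled configuration space on the disjoint union $\Re \sqcup H_+$; at best it is a configuration-type space on the closed half-plane in which boundary and interior points carry different labels and genuinely interact. There is therefore no ``real-root factor'' and ``complex-pair factor'' whose truncations you can smash together, and the approximation statement you would need --- that the map into $\Map{k}{n} \simeq \Omega S^n \times \Omega^2 S^{2n+1}$ of \fullref{lem:lem3.1} is a homology equivalence onto the weight $\leq k$ part --- is exactly what remains unproved.

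The paper closes this gap by an entirely different mechanism. \fullref{thm:thm1.1} is the case $i=0$ of \fullref{prop:propC}(i), since $\RX{k}{0}{n} = \Y{k}{0}{n}$, $W^0(S^{2n}) = \Omega^2 S^{2n+1}$ and $\Dx{q}{0}{2n} = \D{q}{2n-1}$. The homology computation (\fullref{prop:prop3.2}) is a downward induction on the number of permitted common non-real roots, using the long exact sequence of \fullref{lem:lem3.3} attached to the stratification $\Y{k}{2i}{n} - \Y{k}{2i-2}{n} \cong \SP{i}(H_+) \times \RX{k-2i}{0}{n}$, compared against the sequence \eqref{eqn:eqn3.1} for $\X{\Ga{k}{2}}{i}{n}$; the base case is Vassiliev's theorem $\Y{k}{k}{n} \simeq J^k(S^{n-1})$ from \fullref{eg:eg2.1}(ii), with the homology of $\R{m}{n}$ from Cohen et al as input. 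The stable equivalence is then realized by explicit maps --- the loop sum $\mu$, the stable section $e_{q,i,n}$ of the Snaith summand, the map $f_{p,n}$ built from $\RX{1}{0}{n} \simeq S^{n-1}$, and the inclusion $\eta_{q,i,n}$ pushing a complex configuration into $H_+$ and multiplying by its conjugate --- and checked against \fullref{prop:prop3.2} in homology. To make your route work you would have to prove a Segal-type approximation theorem for conjugation-equivariant maps together with a stable splitting of the resulting boundary-interacting configuration space, or else import Vassiliev's result and run an induction as the paper does; as written, your argument does not close.
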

This paper is organized as follows. In \fullref{sec:sec2} 
we state the main results. 
We give a stable splitting of $\smash{\RX{k}{l}{n}}$ in \fullref{thm:thmA}
and \fullref{thm:thmB}. 
In order to prove these theorems, we also consider a space 
$\smash{\Y{k}{l}{n}}$,
which is an open set of $\smash{\RX{k}{l}{n}}$. 
We give a stable splitting of $\smash{\Y{k}{l}{n}}$ 
in \fullref{prop:propC}. In \fullref{sec:sec3} we prove \fullref{prop:propC}. 
In \fullref{sec:sec4} we prove \fullref{thm:thmA}
and \fullref{thm:thmB}. 

\section{Main results} 
\label{sec:sec2}

We set 
$$\Y{k}{l}{n} = \bigl\{ (p_0 (z), \dots, p_n (z)) \in \RX{k}{l}{n} : 
\text{there are no {\it real} roots common to all } p_i(z) \bigr\}.$$
The spaces $\Y{k}{l}{n}$ and $\RX{k}{l}{n}$ are in the following relation: 
\begin{equation*}
\setcounter{MaxMatrixCols}{13}
\begin{matrix} 
\Y{k}{k}{n} & \supset & \Y{k}{k-1}{n} & \supset & \cdots & \supset & 
\Y{k}{l}{n}
& \supset & \cdots & \supset & \Y{k}{1}{n} & = & \Y{k}{0}{n}\\
\cap & {}  & \cap & {} & {} & {} & \cap  & {} & {} & {} & \cap 
& {} & \parallel \\
\RX{k}{k}{n} & \supset & \RX{k}{k-1}{n} & \supset & \cdots & \supset & 
\RX{k}{l}{n}
& \supset & \cdots & \supset & \RX{k}{1}{n} & \supset & \RX{k}{0}{n}
\end{matrix}
\end{equation*}
where each subset is an open set. Moreover, $\Y{k}{2i+1}{n}= 
\Y{k}{2i}{n}$. In fact, if $\alpha \in H_+$ (where $H_+$ is 
the open upper half-plane) is a root of a real polynomial, then so is 
$\ol{\alpha} \in H_-$. 

We have the following examples. 
\begin{exmp}\label{eg:eg2.1} 
$\qua$\newline
\begin{enumerate}
\item It is proved by Mostovoy \cite{M} that 
$\Y{k}{k}{1}$ consists of $k+1$ contractible connected components. 
\item  The following result is proved by Vassiliev \cite{V}.
For $n \geq 3$, there is a 
homotopy equivalence $\Y{k}{k}{n} \simeq J^k (S^{n-1})$,  
where $J^k (S^{n-1})$ is as above the $k$th stage of 
the James filtration of $\Omega S^n$. For $n =2$, these 
spaces are stably homotopy equivalent. 
\item It is proved by Segal \cite{S} that 
$$\RX{k}{0}{1} \simeq \coprod_{q = 0}^k  
\R{\min (q,k-q)}{1}.$$
\item $\RX{k}{k-1}{n} \cong \Re^k \times (\Re^{kn})^\ast$ and 
$\RX{k}{k}{n} \cong \Re^{k(n+1)}$. 
\end{enumerate}
In fact, $(p_0(z), \dots, 
p_n(z)) \in \RX{k}{k}{n}$ is an element of 
$\RX{k}{k-1}{n}$ if and only if $p_i(z) \not= p_j(z)$ for some $i, j$. 
Hence, the first homeomorphism holds. 
\end{exmp}
Now we state our main results. 
\begin{alphthm} \label{thm:thmA} For $n \geq 1$ and $i \geq 0$, there is a  
homotopy equivalence
$$\RX{k}{2i+1}{n} \simeq \X{\Ga{k}{2}}{i}{n},$$
where $\bigl[\frac{k}{2}\bigr]$ denotes as usual the largest integer $\leq \frac{k}{2}$. 
\end{alphthm}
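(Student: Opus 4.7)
The proof strategy is to exploit the complex-conjugation structure of real polynomials. Each real monic $p\in\Re[z]$ of degree $k$ factors uniquely over $\C$ as $p(z)=p_\Re(z)\cdot p_+(z)\cdot\ol{p_+}(z)$, with $\ol{p_+}(z):=\ol{p_+(\bar z)}$, where $p_\Re\in\Re[z]$ carries the real roots and $p_+\in\C[z]$ has all roots in the open upper half-plane $H_+$. For a tuple $(p_0,\dots,p_n)\in\RX{k}{2i+1}{n}$, the total number of common roots equals (common real roots of the $p_{j,\Re}$'s) $+\,2\cdot$(common $H_+$-roots of the $p_{j,+}$'s); the bound $2i+1$ thus forces at most $i$ common $H_+$-roots.

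My plan is to build a map $\Psi\colon\X{\Ga{k}{2}}{i}{n}\to\RX{k}{2i+1}{n}$ as follows. First, observe that the translation homotopy $q_j(z)\mapsto q_j(z-it)$ (which preserves common roots) deformation-retracts $\X{\Ga{k}{2}}{i}{n}$ onto its open subspace $U$ of tuples with all roots in $H_+$. On $U$, define
$$\Psi(q_0,\dots,q_n) = \bigl(z^{k-2\Ga{k}{2}}\cdot q_j(z)\cdot \ol{q_j(\bar z)}\bigr)_{j=0}^{n}.$$
For $(q_j)\in U$, each component is a real monic polynomial of degree $k$; a direct computation shows its common roots consist of $H_+/H_-$ conjugate pairs coming from the common $H_+$-roots of the $q_j$'s, together with $z=0$ when $k$ is odd, giving at most $2i+1$ common roots. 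Hence $\Psi$ lands in $\RX{k}{2i+1}{n}$.

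Next, I would construct an inverse $\Phi\colon\RX{k}{2i+1}{n}\to\X{\Ga{k}{2}}{i}{n}$ by extracting the $H_+$-part $p_{j,+}$ of each $p_j$ and extending to the constant degree $\Ga{k}{2}$ via a deformation that pushes the real roots of each $p_j$ into $H_+$ (their conjugates automatically going to $H_-$). The oddness of the bound $2i+1$ plays a crucial role here: at most one real common root can be absorbed into a new $H_+/H_-$ conjugate pair without exceeding the target common-root bound $i$ on the complex side. Verifying that $\Phi\circ\Psi$ and $\Psi\circ\Phi$ are homotopic to the identities would then follow from the contractibility of the space of pairing choices made during $\Phi$'s construction.

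I expect the main obstacle to be proving that $\Phi$ is well defined and continuous across the stratification of $\RX{k}{2i+1}{n}$ by the number and coincidence pattern of real roots of each $p_j$. When real roots move off the real axis, they must be paired carefully (real roots within a single $p_j$ paired together, or pushed in different directions for different indices) to avoid creating spurious common $H_+$-roots. This will likely require a stratum-wise inductive construction glued by a partition of unity, with the real-common-root strata handled via the parity argument above.
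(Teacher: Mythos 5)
Your forward map $\Psi$ is essentially the map the paper uses (its $\eta_{q,i,n}$, built from a homeomorphism $h\co\C\cong H_+$ applied to roots, rather than a translation homotopy --- note that $q_j(z)\mapsto q_j(z-it)$ does \emph{not} deformation-retract $\X{\Ga{k}{2}}{i}{n}$ onto $U$, since no finite $t$ moves every tuple into $U$; this part is fixable). The genuine gap is the inverse $\Phi$. The decomposition $p=p_\Re\cdot p_+\cdot\ol{p_+}$ is not continuous in $p$: for $p_\epsilon(z)=z^2+\epsilon$ the factor $p_{\epsilon,+}$ has degree $1$ for $\epsilon>0$ and degree $0$ for $\epsilon<0$, so ``extracting the $H_+$-part'' jumps across the locus where real roots collide into conjugate pairs. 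Worse, the two prescriptions you need are incompatible exactly on that locus: when a conjugate pair degenerates to a double real root, continuity of $p_+$ demands that one root go to $H_+$ and one to $H_-$, whereas your ``push real roots into $H_+$'' step sends both up. There is no canonical, continuous pairing of the real roots (the set of pairings is discrete, not contractible, so your final homotopy argument also has no content), and a partition of unity cannot glue these local recipes because there is no way to interpolate in the target between ``two real roots absorbed as a conjugate pair'' and ``two real roots each pushed into $H_+$'' without either changing degrees or violating the common-root bound. In short, no pointwise formula of the kind you describe yields a continuous $\Phi$, and this is not a technical wrinkle but the central difficulty.

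The paper sidesteps the inverse entirely. It constructs only the one map (your $\Psi$, up to the extra factor for odd $k$) and proves it is an equivalence by computing $H_*\bigl(\RX{k}{l}{n}\bigr)$ inductively in $l$, using the long exact sequences arising from the stratification $\RX{k}{l+1}{n}-\RX{k}{l}{n}=\coprod_{a+2b=l+1}\SP{a}(\Re)\times\SP{b}(H_+)\times\RX{k-(l+1)}{0}{n}$ together with the vanishing $H_c^\ast(\SP{a}(\Re))=0$ for $a\geq 2$ (\fullref{prop:prop4.1}), plus the parallel computation for $\Y{k}{2i}{n}$ in \fullref{sec:sec3}. If you want to salvage your approach, you should abandon the explicit $\Phi$ and instead show your $\Psi$ induces an isomorphism on homology by some such spectral-sequence or exact-sequence argument; your observation that the common-root count splits as (real count) $+\,2\cdot(H_+$ count$)$ is exactly what drives that stratification, so your starting point is sound even though the claimed two-sided construction is not.
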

\begin{alphthm} \label{thm:thmB}
For $n \geq 1$ and $i \geq 0$, there is a 
stable homotopy equivalence
$$\RX{k}{2i}{n} \simeqs \X{\Ga{k}{2}}{i}{n} \vee \Sigma^{2in} 
\Biggl(\bvs{p+2q \leq k-2i}{1 \leq p} \Sigma^{p(n-1)} 
\D{q}{2n-1} \vee \bv{p=1}{k-2i} S^{p (n-1)} \Biggr).$$
\end{alphthm}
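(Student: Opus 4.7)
The plan is to combine \fullref{thm:thmA} (applied with $i$ replaced by $i-1$, so $\RX{k}{2i-1}{n} \simeq \X{\Ga{k}{2}}{i-1}{n}$; we may assume $i \geq 1$, since \fullref{thm:thmB} for $i=0$ is equivalent to \fullref{thm:thm1.1}) with a cofibration analysis of the open inclusion
$$\RX{k}{2i-1}{n} \hookrightarrow \RX{k}{2i}{n}.$$
The closed complement $N$ consists of those tuples whose GCD has degree exactly $2i$. Factoring $(p_j) = g(z) \cdot (\tilde p_j(z))$ gives a homeomorphism $N \cong \Re^{2i} \times \RX{k-2i}{0}{n}$; the first factor parametrizes the real monic divisor $g$ of degree $2i$ and is contractible, so $N \simeq \RX{k-2i}{0}{n}$, and a dimension count shows $N$ has real codimension $2in$ in the ambient $\Re^{k(n+1)}$. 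The Pontryagin--Thom construction identifies the cofiber stably with the Thom space of the rank-$2in$ normal bundle of $N$; a direct description of this bundle (whose fiber at $(g;\tilde p_j)$ is the space of $(n+1)$-tuples of residues modulo $g$, quotiented by reparametrizations of $g$) shows it is trivial, whence
$$\RX{k}{2i}{n}\big/\RX{k}{2i-1}{n} \simeqs \Sigma^{2in}\bigl(\RX{k-2i}{0}{n}\bigr)_+ .$$

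Using scanning-map/retraction techniques in the spirit of \cite{CCMM1, CCMM2} (and employed for \fullref{prop:propC}), this cofibration splits stably; combined with \fullref{thm:thmA} this yields
$$\RX{k}{2i}{n} \simeqs \X{\Ga{k}{2}}{i-1}{n} \vee \Sigma^{2in}\bigl(\RX{k-2i}{0}{n}\bigr)_+ .$$
The identical cofibration argument in the complex setting, applied to $\X{\Ga{k}{2}}{i-1}{n} \hookrightarrow \X{\Ga{k}{2}}{i}{n}$ (top stratum $\simeq \R{\Ga{k}{2}-i}{n}$, of real codimension $2in$), recovers from \cite{K} the analogous splitting
$$\X{\Ga{k}{2}}{i}{n} \simeqs \X{\Ga{k}{2}}{i-1}{n} \vee \Sigma^{2in}\bigl(\R{\Ga{k}{2}-i}{n}\bigr)_+ .$$
By \fullref{thm:thm1.1} together with the CCMM splitting $\R{\Ga{k}{2}-i}{n} \simeqs \bv{q=1}{\Ga{k}{2}-i} \D{q}{2n-1}$, the rational part $\R{\Ga{k}{2}-i}{n}$ is precisely the $p=0$ summand of the stable splitting of $\RX{k-2i}{0}{n}$, so stably
$$\bigl(\RX{k-2i}{0}{n}\bigr)_+ \simeqs \bigl(\R{\Ga{k}{2}-i}{n}\bigr)_+ \vee \Bigl( \bvs{p+2q \leq k-2i}{1 \leq p} \Sigma^{p(n-1)} \D{q}{2n-1} \vee \bv{p=1}{k-2i} S^{p(n-1)} \Bigr).$$
Substituting this into the displayed splitting of $\RX{k}{2i}{n}$ and absorbing $\X{\Ga{k}{2}}{i-1}{n} \vee \Sigma^{2in}(\R{\Ga{k}{2}-i}{n})_+$ into $\X{\Ga{k}{2}}{i}{n}$ reproduces the formula of \fullref{thm:thmB}.

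The main obstacle is establishing that the real cofibration splits stably; the Pontryagin--Thom identification of the cofiber and the final wedge-decomposition bookkeeping are routine by comparison. Constructing the stable retraction of $\RX{k}{2i-1}{n} \hookrightarrow \RX{k}{2i}{n}$ amounts to adapting the CCMM scanning-map machinery to the real polynomial setting, and is the technical heart of the proof in \fullref{sec:sec4}.
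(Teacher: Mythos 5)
Your identification of the closed stratum $\RX{k}{2i}{n}-\RX{k}{2i-1}{n}\cong\Re^{2i}\times\RX{k-2i}{0}{n}$, of its codimension $2in$, and of the stable cofiber as $\Sigma^{2in}\bigl(\RX{k-2i}{0}{n}\bigr)_+$ is correct, and is exactly what produces the long exact sequence used in the paper's \fullref{prop:prop4.1}. The fatal step is the assertion that this cofibration splits stably, together with its complex analogue $\X{m}{i}{n}\simeqs\X{m}{i-1}{n}\vee\Sigma^{2in}\bigl(\R{m-i}{n}\bigr)_+$. Both are false. Take $n\geq2$, $k=2$, $i=1$: by \fullref{eg:eg2.1} (iv), $\RX{2}{2}{n}\cong\Re^{2(n+1)}$ is contractible, while your first displayed splitting gives $\RX{2}{2}{n}\simeqs\X{1}{0}{n}\vee\Sigma^{2n}\bigl(\RX{0}{0}{n}\bigr)_+\simeqs S^{2n-1}\vee S^{2n}$. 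The cofibration in question is $S^{2n-1}\hookrightarrow\Re^{2(n+1)}\rightarrow S^{2n}$ (the complement of a codimension-$2n$ affine subspace inside affine space), whose connecting map $S^{2n}\rightarrow\Sigma S^{2n-1}$ has degree $\pm1$, so the boundary map in homology is an isomorphism rather than zero. The same example refutes the complex claim, since $\X{1}{1}{n}=\C^{n+1}$ is contractible but $\X{1}{0}{n}=\R{1}{n}\simeq S^{2n-1}$. More generally, $\X{m}{i}{n}$ is $(2(i+1)n-2)$--connected while $\TH_{2in-1}\bigl(\X{m}{i-1}{n}\bigr)\neq0$ for $m\geq i$, so the latter is never a stable wedge summand of the former. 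The underlying reason is that enlarging $l$ moves $\RX{k}{l}{n}$ toward the contractible space $\Re^{k(n+1)}$, so its homology must shrink, whereas a stable retraction onto $\RX{k}{2i-1}{n}$ would force it to grow; no scanning-type argument can produce the retraction you defer to, because it does not exist.

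The paper's proof is structured around precisely this non-vanishing. In \fullref{prop:prop4.1} the boundary map $\Theta$ of the stratification sequence is computed inductively: on one summand of $H_{\ast-(2i+2)n}\bigl(\RX{k-2i-2}{0}{n}\bigr)$ it equals the nonzero boundary map $\theta$ of the complex sequence \eqref{eqn:eqn3.1}, and only on the remaining summand is it zero. The nontrivial part of $\theta$ is what assembles $H_\ast\bigl(\X{\Ga{k}{2}}{i}{n}\bigr)$ together with the rational piece into $H_\ast\bigl(\X{\Ga{k}{2}}{i+1}{n}\bigr)$; the genuinely split part is only the summand $\Sigma^{(2i+1)n-1}\bigl(\RX{k-2i-1}{0}{n}\vee S^0\bigr)$ appearing in \eqref{eqn:eqn4.1}, which comes from the odd stratum of degree $k-2i-1$, not from $\Sigma^{2in}\bigl(\RX{k-2i}{0}{n}\bigr)_+$. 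The stable equivalence is then realized by explicit maps --- $\eta_{q,i,n}$ on the first summand and a loop-sum construction using $\RX{2i+1}{2i}{n}\simeq S^{(2i+1)n-1}$ on the second --- which are checked to induce a homology isomorphism. Your final bookkeeping would be correct if your two splittings held, but those splittings are where the content of the theorem lies, and they fail.
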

We study $\smash{\RX{k}{l}{n}}$ by induction with making $l$ larger. Hence, 
the induction starts from $\RX{k}{0}{n}$. 
Recall that $\RX{k}{0}{n}= \Y{k}{0}{n}$. We study $\Y{k}{l}{n}$ 
by induction with making $l$ smaller, 
where the initial condition is given in \fullref{eg:eg2.1} (ii). 
In fact, we have the following proposition.
\begin{alphprop}\label{prop:propC}\

\begin{enumerate}
\item For $n \geq 2$, we define the weight of 
stable summands in $\Omega S^n$ as usual, but those in $W^i (S^{2n})$
we define as being 
twice the usual one. Then $\Y{k}{2i}{n}$ is stably homotopy equivalent 
to the collection of stable summands in 
$\Omega S^n \times W^i (S^{2n})$ of weight $\leq k$. Hence,
$$\Y{k}{2i}{n} \simeqs \bv{p+2q \leq k}{} \Sigma^{p(n-1)} \Dx{q}{i}{2n}
\vee \bv{p=1}{k} S^{p (n-1)}.$$
\item When $n=1$, there is a homotopy equivalence 
$$\Y{k}{2i}{1} \simeq \coprod_{q = 0}^k \X{\min (q,k-q)}{i}{1}.$$
\end{enumerate}
\end{alphprop}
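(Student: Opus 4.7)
I prove part (i) by downward induction on $i$, with the base case $2i=k$ (when $k$ is even) or $2i=k-1$ (when $k$ is odd, using the paper's observation that $\Y{k}{k-1}{n}=\Y{k}{k}{n}$); there Vassiliev's identification in \fullref{eg:eg2.1}(ii) together with the classical James splitting $J^k(S^{n-1})\simeqs\bv{p=1}{k}S^{p(n-1)}$ provides the base case, after observing that in this extremal range the summands $\Sigma^{p(n-1)}\Dx{q}{\Ga{k}{2}}{2n}$ must be stably trivial. The induction descends to $i=0$, where the assertion reduces to \fullref{thm:thm1.1}. Part (ii), the case $n=1$, is handled separately by adapting Segal's component decomposition of $\RX{k}{0}{1}$ from \fullref{eg:eg2.1}(iii).

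\textbf{Inductive step via a Pontryagin--Thom collapse.} The open inclusion $\Y{k}{2i}{n}\hookrightarrow\Y{k}{2i+2}{n}$ has closed complement $C_{i+1}$ consisting of tuples whose gcd $c(z)$ has degree exactly $2i+2$. Since no real root is common to all $p_j$, the roots of $c$ lie in $H_+\cup H_-$ in complex conjugate pairs, so $c$ is determined by its $i+1$ roots in $H_+$; writing $p_j=c\cdot q_j$ then gives a homeomorphism $C_{i+1}\cong\SP{i+1}(H_+)\times\RX{k-2i-2}{0}{n}$. The factor $\SP{i+1}(H_+)$ is contractible (deform all $i+1$ roots to a common point of $H_+$ by convex combinations), so $C_{i+1}\simeq\RX{k-2i-2}{0}{n}$. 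A direct count gives codimension $2n(i+1)$; the normal bundle, which locally parametrizes perturbations $(cq_0,cq_1+\epsilon_1,\ldots,cq_n+\epsilon_n)$ with $\epsilon_j\in\Re[z]/c(z)\Re[z]$, has rank $2n(i+1)$ and is trivial because it is pulled back from the contractible $\SP{i+1}(H_+)$ factor. A Pontryagin--Thom collapse then identifies the cofiber $\Y{k}{2i+2}{n}/\Y{k}{2i}{n}$ stably with $\Sigma^{2n(i+1)}(\RX{k-2i-2}{0}{n})_+$, whose stable type is given by \fullref{thm:thm1.1}.

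\textbf{Splitting, part (ii), and main obstacle.} To promote this cofiber identification to the asserted wedge decomposition, I construct a scanning map $\Y{k}{2i}{n}\to\Omega S^n\times W^i(S^{2n})$ in which real roots feed into the labelled configuration model for $\Omega S^n$ and complex-conjugate root pairs in $H_+$, filtered by the number of common pairs (bounded by $i$), feed into $W^i(S^{2n})$; combined with the James and Snaith splittings of the target, restricted to the weight-$\leq k$ part (weight $p$ from $\Omega S^n$ and weight $2q$ from $W^i(S^{2n})$), this yields the stable retractions onto each summand. For part (ii), $n=1$, the degree of the associated real map $\RP{1}\to\RP{1}$ is a locally constant invariant $q\in\{0,\ldots,k\}$ on $\Y{k}{2i}{1}$, partitioning it into components that one identifies with $\X{\min(q,k-q)}{i}{1}$ by extracting the complex-root structure exactly as in Segal's proof of the $i=0$ case. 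The principal technical difficulty is matching, in the inductive step, the geometric cofiber $\Sigma^{2n(i+1)}(\RX{k-2i-2}{0}{n})_+$ with the expected ``difference'' $\bv{p+2q\leq k}{}\Sigma^{p(n-1)}(\Dx{q}{i+1}{2n}/\Dx{q}{i}{2n})$ between the two successive proposition formulas; this reduces to identifying each graded piece $\Dx{q}{i+1}{2n}/\Dx{q}{i}{2n}$ with an appropriate shift of the CCMM summand $\D{q-i-1}{2n-1}$, a computation to be extracted from the labelled configuration model for $W^l(S^{2n})$ used in \cite{K}, together with verifying that the cofiber sequence splits stably on each filtration piece via the scanning map.
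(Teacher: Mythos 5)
Your stratification is the right one and matches the paper's: the locus in $\Y{k}{2i+2}{n}$ where the gcd has degree exactly $2i+2$ is identified with $\SP{i+1}(H_+)\times\RX{k-2i-2}{0}{n}$, exactly as in \fullref{lem:lem3.3}, and your base case ($2i=k$, via Vassiliev's $\Y{k}{k}{n}\simeq J^k(S^{n-1})$ together with the vanishing of $\Dx{q}{i}{2n}$ for $q\leq i$) is the paper's starting point as well. But the two steps that constitute the actual proof are missing. First, the cofibration (or the long exact sequence it yields) does not by itself determine $H_\ast(\Y{k}{2i}{n})$: you must identify the connecting homomorphism. The paper does this in \fullref{prop:prop3.2} by showing that the boundary map $\phi$ of \fullref{lem:lem3.3} is the restriction to $H_\ast(\Y{k}{2i}{n})\subset H_\ast(\Omega S^n)\otimes H_\ast\bigl(\X{\Ga{k}{2}}{i}{n}\bigr)$ of $1\otimes\psi$, where $\psi$ is the known boundary map \eqref{eqn:eqn3.1} for the complex spaces; without some such identification your induction cannot even compute the homology of $\Y{k}{2i}{n}$, let alone split it. Second, the splitting itself is exactly the point you defer (``to be extracted,'' ``verifying that the cofiber sequence splits''). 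Your proposed mechanism --- a scanning map $\Y{k}{2i}{n}\to\Omega S^n\times W^i(S^{2n})$ followed by the James and Snaith projections of the target --- is never constructed, and it is not clear what it should do on a tuple with common conjugate root pairs, since $W^i(S^{2n})$ is defined as a homotopy fiber rather than as a configuration space receiving root data. Relatedly, your appeal to \fullref{thm:thm1.1} for the stratum $\RX{k-2i-2}{0}{n}$ is an appeal to the $i=0$ case of the very statement being proved, so you need the double induction on $(k,i)$ (the paper's total order in the proof of \fullref{prop:prop3.2}); as written, a single downward induction on $i$ is circular.

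The paper resolves the splitting in the opposite direction: it builds explicit stable maps \emph{into} $\Y{k}{2i}{n}$ from each wedge summand, namely $g_{p,q,i,n}\co\Sigma^{p(n-1)}\Dx{q}{i}{2n}\to\Y{k}{2i}{n}$ obtained by composing a stable section $\Dx{q}{i}{2n}\to\X{q}{i}{n}$ with the root-doubling inclusion $\eta_{q,i,n}\co\X{q}{i}{n}\hookrightarrow\Y{2q}{2i}{n}$ of \eqref{eqn:eqn3.2}, the sphere classes coming from $\RX{1}{0}{n}\simeq S^{n-1}$, and the loop sum $\mu$; the wedge of these is then checked to be a homology isomorphism using \fullref{prop:prop3.2}. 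If you want to salvage your outline, the most economical fix is to replace the unconstructed scanning map by these explicit inclusions (which also handle part (ii), where the same maps are genuinely unstable and respect the decomposition into components indexed by the degree $q$ of the induced map $\RP{1}\to\RP{1}$), and to add the comparison of your cofiber sequence with the complex-case sequence so that the connecting maps are known.
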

Note that \fullref{prop:propC} (ii) is a generalization of 
\fullref{eg:eg2.1} (i) and (iii). 

\section[Proof of proposition]{Proof of \fullref{prop:propC}}
\label{sec:sec3}

We study the space of continuous maps which contains  
$\Y{k}{k}{n}$ or $\RX{k}{0}{n}$. For simplicity, we assume 
$n \geq 2$. (The case for $n=1$ can be obtained by slight modifications.) 
Each $f \in \Y{k}{k}{n}$ defines a map $f \co S^1 \rightarrow \RP{n}$, 
where $S^1 = \Re \cup \infty$. Hence, there is a natural map 
$$\Y{k}{k}{n} \rightarrow \Omega_{k \mod 2} \RP{n} \simeq 
\Omega S^n.$$ 
\fullref{eg:eg2.1} (ii) implies that $\Y{k}{k}{n}$ is the $k(n-1)$--skeleton 
of $\Omega S^n$. 

On the other hand, let $\smash{\Map{k}{n}}$ be the space of 
continuous basepoint-preserving 
conjugation-equivariant maps of degree $k$ from $\CP{1}$ to $\CP{n}$. 
Then there is an inclusion 
$$\RX{k}{0}{n} \hookrightarrow \Map{k}{n}.$$
\begin{lem} \label{lem:lem3.1} 
For $n \geq 2$, $\Map{k}{n} \simeq \Omega S^n \times 
\Omega^2 S^{2n+1}$. 
\end{lem}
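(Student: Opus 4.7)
The plan is to realise $\Map{k}{n}$ as a homotopy pullback and show the resulting principal fibration is trivial.

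Write $\CP{1} = S^2 = D_+ \cup_{S^1} D_-$ as two disks glued along the equator $S^1 = \RP{1}$, with conjugation fixing $S^1$ and interchanging $D_+$ and $D_-$. A $T$--equivariant based map $f\colon \CP{1} \to \CP{n}$ of degree $k$ is then the same data as a pair $(g,F)$, with $g = f|_{S^1}$ a based loop $S^1 \to \RP{n}$ of class $k \mod 2$, and $F = f|_{D_+}$ a based map $D_+ \to \CP{n}$ restricting to $i \circ g$ on the boundary, where $i\colon \RP{n} \hookrightarrow \CP{n}$ is the inclusion. This yields the (homotopy) pullback square
\begin{equation*}
\begin{matrix}
\Map{k}{n} & \longrightarrow & \mathrm{Map}_{\ast}(D_+, \CP{n}) \\
\downarrow & & \downarrow \\
\Omega_{k \mod 2} \RP{n} & \xrightarrow{\Omega i} & \Omega \CP{n}
\end{matrix}
\end{equation*}
in which the right-hand vertical is restriction to the boundary. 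Since $D_+$ is contractible this is the based path--loop fibration over $\Omega \CP{n}$, with contractible total space and fibre $\Omega^2 \CP{n}$. Hence $\Map{k}{n}$ fibres over $\Omega_{k \mod 2}\RP{n}$ with fibre $\Omega^2 \CP{n}$, classified precisely by $\Omega i$.

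The heart of the argument is to show $\Omega i$ is null-homotopic. On the identity component, the universal cover $S^n \to \RP{n}$ gives a homotopy equivalence $\Omega S^n \simeq \Omega_0 \RP{n}$, and under this $\Omega i|_{\Omega_0}$ becomes the loop of the composite $S^n \to \RP{n} \hookrightarrow \CP{n}$. This composite represents a class in $\pi_n(\CP{n})$, and the Hopf fibration $S^1 \to S^{2n+1} \to \CP{n}$ identifies $\pi_n(\CP{n}) \cong \pi_n(S^{2n+1}) = 0$ for $n \geq 2$. For the nontrivial component $\Omega_1 \RP{n}$, pick any $\gamma_0 \in \Omega_1 \RP{n}$: loop multiplication by $\gamma_0$ yields an equivalence $\Omega_0 \RP{n} \simeq \Omega_1 \RP{n}$, and under $\Omega i$ this introduces only a translation by $i \circ \gamma_0 \in \Omega \CP{n}$, which is null-homotopic since $\CP{n}$ is simply connected. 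Hence $\Omega i$ is null on both components.

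Nullity of the classifying map trivialises the pulled-back fibration, giving $\Map{k}{n} \simeq \Omega_{k \mod 2}\RP{n} \times \Omega^2 \CP{n}$. To finish, the universal cover yields $\Omega_{k \mod 2}\RP{n} \simeq \Omega S^n$ for $n \geq 2$, while the Hopf fibration together with $\Omega^2 S^1 \simeq \ast$ gives $\Omega^2 \CP{n} \simeq \Omega^2 S^{2n+1}$. The principal obstacle is the nullity step; the rest is organisational. That step in turn reduces cleanly to the single vanishing $\pi_n(\CP{n}) = 0$ for $n \geq 2$.
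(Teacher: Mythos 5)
Your proof is correct and follows essentially the same route as the paper: both identify $\Map{k}{n}$ with a space of maps of pairs $(D^2, S^1, \ast) \to (\CP{n}, \RP{n}, \ast)$, exhibit it as the pullback of the path--loop fibration with fibre $\Omega^2 S^{2n+1}$ along a looped map, and trivialise that fibration because the classifying map is null --- your $\pi_n(\CP{n})=0$ being the same fact, via the Hopf fibration, as the paper's nullhomotopy of the lift $S^n \to S^{2n+1}$. The only cosmetic difference is that the paper first reduces to degree $0$ and works upstairs on universal covers throughout, while you stay with $\RP{n}$ and $\CP{n}$ and pass to covers at the end.
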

\begin{proof}
It is easy to see that 
$$\Map{k}{n} \simeq \Map{0}{n}.$$ 
Since $\Map{0}{n}$ can be thought as the space of maps 
$$(D^2, S^1, \ast) \rightarrow (\CP{n}, \RP{n}, \ast)$$ of degree $0$, 
there is a fibration 
$$\Omega^2 S^{2n+1} \rightarrow \Map{0}{n} \rightarrow \Omega S^n.$$
This is a pullback of the path fibration $\Omega^2 S^{2n+1} 
\rightarrow P \Omega S^{2n+1} \rightarrow \Omega S^{2n+1}$ by the map 
$\Omega f\co \Omega S^n \rightarrow \Omega S^{2n+1}$, where 
$f\co S^n \rightarrow S^{2n+1}$ is a lift of the inclusion 
$\RP{n} \hookrightarrow \CP{n}$. Since $f$ is null homotopic, 
the fibration is trivial. This completes the proof of 
\fullref{lem:lem3.1}. 
\end{proof}

Hereafter, every homology is with $\Zp$--coefficients, 
where $p$ is a prime. 
Recall that for $n \geq 2$, we have $H_\ast (\Omega S^n) 
\cong \Zp \left[ x_{n-1} \right]$. We define the weight of $x_{n-1}$ by 
$w (x_{n-1})=1$. 
On the other hand, we define the weight of 
an element of $H_{\ast}\bigl(\smash{\X{k}{i}{n}}\bigr)$ as being twice the usual one. 
For example, let $y_{2(l+1)n-1}$ be the generator of 
$\TH_\ast\bigl(\smash{\X{k}{l}{n}}\bigr)$ 
of least degree. The usual weight of $y_{2(l+1)n-1}$ is $l+1$, 
but we reset 
$w (y_{2(l+1)n-1}) = 2(l+1)$. 
\begin{prop}\label{prop:prop3.2}
For $n \geq 2$, 
$H_\ast\bigl(\smash{\Y{k}{2i}{n}}\bigr)$ is isomorphic to the subspace of 
$H_{\ast}\bigl(\Omega S^n{\times}\smash{\X{k}{i}{n}}\bigr)$ 
spanned by monomials of weight $\leq k$.
\end{prop}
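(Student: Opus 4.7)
The plan is to argue by downward induction on $i$, with the base case at the smallest $i$ such that $2i \geq k$, so that $\Y{k}{2i}{n} = \Y{k}{k}{n}$. By \fullref{eg:eg2.1}(ii), this base space is homotopy equivalent to $J^k(S^{n-1})$, with $\Zp$-homology $\Zp\{x_{n-1}^p : 0 \leq p \leq k\}$. On the right-hand side, the lowest nontrivial class of $\TH_\ast(\X{k}{i}{n})$ has doubled weight at least $2(i+1) > k$, so the weight-$\leq k$ subspace of $H_\ast(\Omega S^n \times \X{k}{i}{n})$ collapses to $\Zp\{x_{n-1}^p : 0 \leq p \leq k\}$, matching the homology of $\Y{k}{k}{n}$.

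For the inductive step, I would exploit the open stratification $\Y{k}{2i}{n} \subset \Y{k}{2(i+1)}{n}$, whose closed complement $\Sigma_Y$ consists of those real tuples having exactly $i+1$ pairs of common complex-conjugate roots. This $\Sigma_Y$ fibers over the configuration space of $i+1$ unordered points in $H_+$, with fiber the reduced space $\RX{k-2(i+1)}{0}{n}$ of real tuples of degree $k - 2(i+1)$ having no common roots at all. A parallel stratification $\X{k}{i}{n} \subset \X{k}{i+1}{n}$ has closed complement $\Sigma_X$ fibered over the configuration space of $i+1$ points in $\C$ with fiber $\R{k-(i+1)}{n}$. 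Because $H_+ \cong \C$, the two base configuration spaces are homeomorphic, and the doubling of weights on the $X$-side is calibrated precisely so that each conjugate pair of common roots (costing two units of degree on the $Y$-side) matches a single complex common root (costing one unit on the $X$-side).

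Compare the Thom--Gysin long exact sequences of the two open-closed pairs via a natural comparison map $\Y{k}{2\bullet}{n} \to \Omega S^n \times \X{k}{\bullet}{n}$, whose first factor is the composition $\Y{k}{2i}{n} \hookrightarrow \Map{k}{n} \simeq \Omega S^n \times \Omega^2 S^{2n+1}$ from \fullref{lem:lem3.1} followed by projection, and whose second factor is the ``fold'' map sending a real tuple with conjugate-pair common roots in $H_+$ to a complex tuple with common roots at the $H_+$ representatives. With the inductive hypothesis supplying the statement for $\Y{k}{2(i+1)}{n}$, the five-lemma reduces the claim to matching the stratum contributions on the two sides.

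The main obstacle will be the simultaneous weight bookkeeping and a secondary induction on $k$ needed to identify the homology of the reduced fiber $\RX{k-2(i+1)}{0}{n}$ with the weight-$\leq k-2(i+1)$ subspace of $H_\ast(\Omega S^n \times \R{k-(i+1)}{n})$; the recursion must close consistently, which requires either a careful joint induction on the pair $(k, i)$ ordered lexicographically, or direct verification of the terminal case $k - 2(i+1) = 0$ where the reduced fiber degenerates to a point. The doubling convention on $H_\ast(\X{k}{i}{n})$ is precisely the mechanism that keeps these two ledgers aligned through the induction.
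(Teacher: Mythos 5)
Your overall strategy is the paper's: downward induction on $i$ starting from the base case $\Y{k}{k}{n}\simeq J^k(S^{n-1})$ of \fullref{eg:eg2.1}\,(ii), comparison of the long exact sequence coming from the stratification $\Y{k}{2i}{n}\subset\Y{k}{2i+2}{n}$ with the corresponding sequence for $\X{\Ga{k}{2}}{i}{n}\subset\X{\Ga{k}{2}}{i+1}{n}$, and a joint induction on the pair $(k,i)$ (the paper orders by $k$ increasing, then $i$ decreasing) so that the inductive hypothesis also covers the fiber $\RX{k-2i-2}{0}{n}=\Y{k-2i-2}{0}{n}$; your base-case computation and your diagnosis of the lexicographic induction are both correct. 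However, the step on which your five-lemma argument rests fails: the ``natural comparison map'' $\Y{k}{2i}{n}\to\Omega S^n\times\X{k}{i}{n}$ does not exist as a map of spaces. For $i>0$ an element of $\Y{k}{2i}{n}$ may have (non-real) common roots, so it does not define a map $\CP{1}\to\CP{n}$ and the claimed inclusion into $\Map{k}{n}$ is unavailable; only the restriction to the real line, $\Y{k}{2i}{n}\to\Omega_{k\mod 2}\RP{n}\simeq\Omega S^n$, is defined. More seriously, the ``fold'' second factor is not a well-defined continuous map: there is no continuous way to halve a real degree-$k$ tuple, and the common roots alone do not determine a point of $\X{k}{i}{n}$. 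The paper's geometric map goes in the opposite direction ($\eta_{q,i,n}\co\X{q}{i}{n}\hookrightarrow\Y{2q}{2i}{n}$ in \eqref{eqn:eqn3.2}, doubling each root via a homeomorphism $\C\cong H_+$), and in the proof of \fullref{prop:prop3.2} the comparison of the two exact sequences is purely algebraic: the inductive hypothesis identifies $H_\ast(\Y{k}{2i}{n})$ with the weight-$\leq k$ subspace of $H_\ast(\Omega S^n)\otimes H_\ast\bigl(\X{\Ga{k}{2}}{i}{n}\bigr)$, and one checks that under this identification the homomorphism $\phi$ of \fullref{lem:lem3.3} is the restriction of $1\otimes\psi$ from \eqref{eqn:eqn3.1}. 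That identification is the actual content of the inductive step, and your sketch does not supply a substitute for it.

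A second, smaller error: the strata are products with symmetric products, not configuration spaces. One has $\Y{k}{2i+2}{n}-\Y{k}{2i}{n}\cong\SP{i+1}(H_+)\times\RX{k-2i-2}{0}{n}$ and $\X{k}{i+1}{n}-\X{k}{i}{n}=\C^{i+1}\times\R{k-i-1}{n}$, because common roots may occur with multiplicity. This is not cosmetic: the clean degree shift by $2(i+1)n$ in the resulting exact sequence uses $\SP{i+1}(H_+)\cong\Re^{2(i+1)}$, whereas a genuine configuration space of distinct points is neither contractible nor the correct stratum (it omits tuples whose common roots collide), so the sequence would not take the stated form.
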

We prove the proposition from the following lemma. 
\begin{lem} \label{lem:lem3.3}
We have the following long exact sequence: 
$$\cdots  \rightarrow  H_\ast (\Y{k}{2i-2}{n}) \rightarrow 
H_\ast (\Y{k}{2i}{n}) 
\overset{\phi}{\rightarrow} H_{\ast-2in}(\RX{k-2i}{0}{n}) 
\rightarrow 
H_{\ast-1}(\Y{k}{2i-2}{n}) \rightarrow \cdots.$$  
\end{lem}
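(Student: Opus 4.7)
The plan is to derive the stated sequence as the long exact sequence of the pair $(\Y{k}{2i}{n}, \Y{k}{2i-2}{n})$, after identifying the relative homology via a Thom-type isomorphism. First I would observe that because the degree of the GCD of a tuple of polynomials is upper semicontinuous in the coefficients, $\Y{k}{2i-2}{n}$ is open in $\Y{k}{2i}{n}$; the closed complement
$$Z := \Y{k}{2i}{n} \setminus \Y{k}{2i-2}{n}$$
consists exactly of those $(p_0,\ldots,p_n)$ whose real GCD has degree exactly $2i$. Because no real common roots are allowed and non-real roots come in complex conjugate pairs, such a GCD factors as $q(z)=\prod_{j=1}^{i}(z-\alpha_j)(z-\overline{\alpha}_j)$ with each $\alpha_j\in H_+$, and the multiplication map
$$\SP{i}(H_+)\times \RX{k-2i}{0}{n} \xrightarrow{\ \cong\ } Z,\qquad (q,(r_0,\ldots,r_n))\longmapsto (qr_0,\ldots,qr_n),$$
is a homeomorphism. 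Since $H_+\cong \Re^2$ and $\SP{i}(\Re^2)\cong \Re^{2i}$ via elementary symmetric functions, the first factor is contractible and $Z\simeq \RX{k-2i}{0}{n}$.

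Next I would analyze the normal directions to $Z$ in $\Y{k}{2i}{n}$ and produce a tubular neighborhood. A tangent vector to $Z$ at $(qr_0,\ldots,qr_n)$ has the form $(vr_j+qs_j)_j$ with $\deg v<2i$ and $\deg s_j<k-2i$, so a dimension count gives real codimension $2in$. The normal fibre is naturally a quotient of $(\Re[z]/q)^{n+1}$, and the Chinese remainder theorem $\Re[z]/q\cong \C^i$ endows it with a canonical complex structure of complex rank $in$; in particular the normal bundle is orientable over any $\Zp$. A genuine tube is produced via Euclidean division: near a point of $Z$, each $p_j$ can be written uniquely as $p_j=\widetilde{q}\,\widetilde{r}_j+\rho_j$ with $\deg\rho_j<2i$, and the $\rho_j$'s furnish continuous normal coordinates. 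The Thom isomorphism then yields
$$H_\ast(\Y{k}{2i}{n},\,\Y{k}{2i-2}{n})\;\cong\; H_{\ast-2in}(Z)\;\cong\; H_{\ast-2in}(\RX{k-2i}{0}{n}),$$
and splicing this into the long exact sequence of the pair produces the stated LES, with $\phi$ equal to the quotient onto the relative group composed with the Thom isomorphism.

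The main obstacle is producing the tubular neighborhood rigorously. The complex structure furnished by the Chinese remainder theorem makes orientability essentially free, but verifying that the Euclidean-division normal form $p_j=\widetilde{q}\,\widetilde{r}_j+\rho_j$ gives a continuous homeomorphism from a bundle neighborhood of the zero section of the normal bundle onto an open neighborhood of $Z$ in $\Y{k}{2i}{n}$ requires one to check that a coherent continuous choice of $\widetilde{q}$ (the ``nearly common'' divisor of the $p_j$'s) is globally available on some uniform neighborhood of $Z$; equivalently, one needs the stratification of $\Y{k}{2i}{n}$ by GCD degree to be normally nonsingular, so that the Thom isomorphism applies.
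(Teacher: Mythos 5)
Your identification of the closed stratum $Z=\Y{k}{2i}{n}\setminus\Y{k}{2i-2}{n}\cong\SP{i}(H_+)\times\RX{k-2i}{0}{n}$, including the conjugate-pair argument forcing the common divisor to have degree exactly $2i$ and the contractibility of $\SP{i}(H_+)\cong\Re^{2i}$, is exactly the decomposition the paper uses; that is the entire geometric content of its proof, the rest being delegated to the analogous argument of \cite[Propositions 4.5 and 5.4]{K}. Where you genuinely diverge is in how the long exact sequence is extracted from this decomposition. The paper's route (as one can infer from the proof of \fullref{prop:prop4.1}, which invokes $H_c^\ast(\SP{a}(\Re))$) is the long exact sequence of the pair in cohomology with compact supports, $\cdots\to H_c^\ast(\Y{k}{2i-2}{n})\to H_c^\ast(\Y{k}{2i}{n})\to H_c^\ast(Z)\to\cdots$, converted to the stated homology sequence by Poincar\'e duality: all three spaces are manifolds (the first two are open in $\Re^{k(n+1)}$, and $Z\cong\Re^{2i}\times\RX{k-2i}{0}{n}$ is an open subset of a Euclidean space crossed with $\Re^{2i}$), orientable, of dimensions $k(n+1)$, $k(n+1)$, and $k(n+1)-2in$, which produces the shift by $2in$ with no tubular neighborhood needed. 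Your route via a normal slice and the Thom isomorphism reaches the same sequence but puts the entire burden on the normal nonsingularity of $Z$, which you correctly flag as the remaining obstacle; it is in fact not a fatal gap, since your tangent-space computation shows the parametrization of $Z$ is an injective immersion with continuous inverse (the degree-$2i$ GCD varies continuously on the stratum where its degree is constant), so $Z$ is a closed embedded smooth submanifold and the standard tubular neighborhood theorem applies, with your Chinese-remainder complex structure handling $\Zp$-orientability for odd $p$. The trade-off: the duality argument is shorter and more robust (it only needs $H_c^\ast$ of the stratum, not a normally nonsingular inclusion), while your argument, once completed, identifies $\phi$ more concretely as a Gysin/intersection map, which is genuinely useful later when one must compare $\phi$ with the map $1\otimes\psi$ in the proof of \fullref{prop:prop3.2}.
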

\begin{proof}
In \cite[Propositions 4.5 and 5.4]{K}, we constructed a 
similar long exact sequence 
from the fact that 
\begin{equation*}
\X{k}{l}{n} -\X{k}{l-1}{n} = \C^l  \times \R{k-l}{n}, 
\end{equation*}
where $\C^l \times \R{k-l}{n}$ corresponds to the subspace of
$\X{k}{l}{n}$ consisting of elements $(p_0(z),$ $\dots, p_n(z))$ 
such that there are exactly $l$ roots common to all $p_i(z)$. 
The proposition is proved 
similarly using the fact that 
$$\Y{k}{2i}{n} - \Y{k}{2i-2}{n} \cong \SP{i} (H_+) \times \RX{k-2i}{0}{n},$$ 
where $\SP{i} (H_+)$ denotes the $i$th symmetric product of $H_+$.   
\end{proof}

\begin{proof}[Proof of \fullref{prop:prop3.2}]
In order to prove \fullref{prop:prop3.2} by induction, we introduce 
the following total order $\leq$ to $\Y{k}{2i}{n}$ for $k \geq 1$ 
and $i \geq 0$:
$\Y{k}{2i}{n} < \Y{k'}{2 i'}{n}$ if and only if 
\begin{enumerate}
\item $k < k'$, or
\item $k= k'$ and $i > i'$. 
\end{enumerate}
By \fullref{eg:eg2.1} (ii), \fullref{prop:prop3.2} holds for $\Y{k}{k}{n}$. 
Assuming that \fullref{prop:prop3.2} holds for $\Y{k}{2i}{n}$ 
and $\RX{k-2i}{0}{n}$, we prove for $\Y{k}{2i-2}{n}$. 
We have the following long exact 
sequence: 
\begin{multline} \label{eqn:eqn3.1}
\cdots \longrightarrow H_\ast \Bigl(\X{\Ga{k}{2}}{i-1}{n}\Bigr) \longrightarrow
H_\ast \Bigl(\X{\Ga{k}{2}}{i}{n}\Bigr) \\
\overset{\psi}{\longrightarrow} H_{\ast-2in}\Bigl(\R{\Ga{k}{2} -i}{n}\Bigr)
\overset{\theta}{\longrightarrow} 
H_{\ast-1}\Bigl(\X{\Ga{k}{2}}{i-1}{n}\Bigr) \longrightarrow \cdots.
\end{multline} 
For $n \geq 2$, we consider the homomorphism 
$$1 \otimes \psi \co H_\ast (\Omega S^n) \otimes H_\ast
\Bigl(\X{\Ga{k}{2}}{i}{n}\Bigr) \rightarrow H_\ast (\Omega S^n) \otimes
  H_{\ast-2in}\Bigl(\R{\Ga{k}{2}-i}{n}\Bigr).$$ 
Restricting the domain to $H_\ast (\Y{k}{2i}{n})$, we obtain the 
homomorphism $\phi$ in \fullref{lem:lem3.3}.
Now it is easy to prove \fullref{prop:prop3.2}. 
\end{proof}
\begin{proof}[Proof of \fullref{prop:propC} (i)] 
We construct a stable map from the 
right-hand side of \fullref{prop:propC} (i) to $\Y{k}{2i}{n}$. 
Since our constructions are similar, 
we construct a stable map 
$g_{p, q, i, n} \co \Sigma^{p (n-1)} \smash{\Dx{q}{i}{2n} \rightarrow 
\Y{k}{2i}{n}}$.
First, using the fact that 
$\smash{\RX{1}{0}{n}} \simeq S^{n-1}$ (see \fullref{eg:eg2.1} (iv)), 
there is a stable map $f_{p,n} \co S^{p(n-1)} \rightarrow
\smash{\RX{p}{0}{n}}$.
Second, there is a stable section 
$e_{q,i,n}\co \Dx{q}{i}{2n} \rightarrow \X{q}{i}{n}$. Third, there 
is an inclusion 
\begin{equation} \label{eqn:eqn3.2}
\eta_{q,i,n}\co \X{q}{i}{n} \hookrightarrow \Y{2q}{2i}{n}. 
\end{equation}
To construct this, 
we fix a homeomorphism $h : \C \overset{\cong}{\rightarrow} H_+$. 
For $(p_0(z), \dots, p_n(z))$
\newline
$\in \X{q}{i}{n}$, we write 
$p_j(z)= \prod_{s=1}^q (z- \alpha_{s,j})$. Then we set 
\begin{multline*}
\eta_{q,i,n} (p_0(z), \dots, p_n(z))\\
=\biggl( \prod_{s=1}^q (z- h(\alpha_{s,0})) (z- \ol{h (\alpha_{s,0})}), 
\dots, \prod_{s=1}^q (z- h(\alpha_{s,n})) (z- \ol{h (\alpha_{s,n})})
\biggr).
\end{multline*}
Now consider the following composite of maps 
\begin{equation} \label{eqn:eqn3.3}
S^{p(n-1)} \times \Dx{q}{i}{2n} 
\mapright{f_{p, n} \times 
(\eta_{q,i,n} \circ e_{q,i,n})}
\RX{p}{0}{n} \times \Y{2q}{2i}{n} 
\overset{\mu}{\rightarrow} \Y{p+2q}{2i}{n} 
\hookrightarrow \Y{k}{2i}{n}, 
\end{equation}
where $\mu$ is a loop sum which is constructed in the same way as in 
the loop sum $\R{k}{n} \times \R{l}{n} \rightarrow \R{k+l}{n}$ in 
Boyer--Mann \cite{BM}. 
We can construct $g_{p, q, i, n}$ from \eqref{eqn:eqn3.3}. 

Note that the stable map for \fullref{prop:propC} (i) is 
compatible with the homology splitting by weights. Using 
\fullref{prop:prop3.2},  
it is easy to show that this map induces an isomorphism in homology, 
hence is a stable homotopy equivalence. 
This completes the proof of \fullref{prop:propC} (i). 
\end{proof}
\begin{proof}[Proof of \fullref{prop:propC} (ii)]
By a similar argument to the proof of \fullref{prop:prop3.2}, we can calculate 
$H_\ast (\Y{k}{2i}{1})$. Then we can construct an unstable map from the 
right-hand side of \fullref{prop:propC} (ii) to $\Y{k}{2i}{1}$ 
in the same way as in \fullref{prop:propC} (i). 
\end{proof}

\section[Proofs of Theorems A and B]{Proof of \fullref{thm:thmA} and 
\fullref{thm:thmB}} 
\label{sec:sec4}

\begin{prop} \label{prop:prop4.1}
The homologies of the both sides of \fullref{thm:thmA} or
\fullref{thm:thmB} are isomorphic. 
\end{prop}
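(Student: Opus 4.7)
The plan is to mimic the strategy of \fullref{prop:prop3.2}: set up a long exact sequence that computes $H_\ast(\RX{k}{l}{n})$ by induction on $l$, and then compare term by term with the homology of the right-hand side of \fullref{thm:thmA} or \fullref{thm:thmB}.

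First, I would observe that $\RX{k}{l-1}{n}$ is open in $\RX{k}{l}{n}$ (as indicated in the filtration diagram of \fullref{sec:sec2}), with closed complement consisting of those tuples whose greatest common divisor has degree exactly $l$. Factoring out this common factor, which is a monic real polynomial of degree $l$ parameterized by its coefficients, yields a homeomorphism
$$\RX{k}{l}{n} - \RX{k}{l-1}{n} \cong \Re^l \times \Y{k-l}{0}{n}.$$
This closed stratum has codimension $ln$, and an excision/Thom argument exactly parallel to the one behind \fullref{lem:lem3.3} then produces the long exact sequence
$$\cdots \to H_\ast(\RX{k}{l-1}{n}) \to H_\ast(\RX{k}{l}{n}) \to H_{\ast-ln}(\Y{k-l}{0}{n}) \to H_{\ast-1}(\RX{k}{l-1}{n}) \to \cdots.$$

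Next, starting from $\RX{k}{0}{n} = \Y{k}{0}{n}$, whose homology is given by \fullref{prop:propC}(i), I would iterate this sequence to build $H_\ast(\RX{k}{l}{n})$ out of the pieces $H_\ast(\Y{k-s}{0}{n})$ for $0 \leq s \leq l$. On the right-hand side of \fullref{thm:thmA}, $H_\ast(\X{\Ga{k}{2}}{i}{n})$ is similarly built up by the analogous long exact sequence \eqref{eqn:eqn3.1} from $H_\ast(\R{\Ga{k}{2}-j}{n})$ for $0 \leq j \leq i$; for \fullref{thm:thmB} one further adds the homology of the wedge of suspensions, which is immediate. Using \fullref{prop:propC} together with the stable splitting of $\R{q}{n}$ recorded in \eqref{eqn:eqn1.1}, both sides can then be matched as graded $\Zp$-modules.

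The hard part will be showing that the long exact sequence above splits into short exact sequences, i.e.\ that each connecting homomorphism vanishes. I expect this to follow from a geometric construction analogous to the map $\eta_{q,i,n}$ of \eqref{eqn:eqn3.2}--\eqref{eqn:eqn3.3}: one produces a section of the surjection onto $H_{\ast-ln}(\Y{k-l}{0}{n})$ by loop-summing a family in $\Y{k-l}{0}{n}$ with a fixed tuple of the form $\bigl(\prod_s (z-\beta_s), \ldots, \prod_s (z-\beta_s)\bigr)$ carrying $l$ prescribed real common roots $\beta_1,\dots,\beta_l$, thereby realizing each generator of $H_{\ast-ln}(\Y{k-l}{0}{n})$ by an explicit cycle in $\RX{k}{l}{n}$. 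Once the splitting is in place, the identification with the right-hand side of \fullref{thm:thmA} or \fullref{thm:thmB} reduces to a bookkeeping comparison using \fullref{prop:propC} and \eqref{eqn:eqn1.1}.
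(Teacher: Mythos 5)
Your long exact sequence is correct and is in fact the same one the paper uses: the complement $\RX{k}{l}{n} - \RX{k}{l-1}{n}$ of the open subset $\RX{k}{l-1}{n}$ is the stratum of tuples with exactly $l$ common roots, homeomorphic to $\Re^l \times \RX{k-l}{0}{n}$ of codimension $ln$ (the paper packages this as $\coprod_{a+2b=l} \SP{a}(\Re) \times \SP{b}(H_+) \times \RX{k-l}{0}{n}$ and discards the strata with $a \geq 2$ since $H_c^\ast(\SP{a}(\Re))=0$ there, which comes to the same thing).

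The gap is your final paragraph: the connecting homomorphism does \emph{not} vanish, and the sequence does not split into short exact sequences. The cheapest counterexample is $l=k$: if every connecting map vanished, $H_\ast\bigl(\RX{k}{k}{n}\bigr)$ would contain $\bigoplus_{s} H_{\ast - sn}\bigl(\RX{k-s}{0}{n}\bigr)$ as a summand, yet $\RX{k}{k}{n} \cong \Re^{k(n+1)}$ is contractible by \fullref{eg:eg2.1}\,(iv). Already for $k=l=1$ and $n\geq 2$ the sequence reads $0 \to H_n(\mathrm{pt}) \to H_0(\mathrm{pt}) \to H_{n-1}(S^{n-1}) \to H_{n-1}(\mathrm{pt})=0$, which forces the connecting map to be an isomorphism $\Zp \to \Zp$. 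Note also that your proposed geometric section is dimensionally impossible: the map $H_\ast\bigl(\RX{k}{l}{n}\bigr) \to H_{\ast-ln}\bigl(\Y{k-l}{0}{n}\bigr)$ lowers degree by the codimension $ln$, so a loop-sum map of spaces $\Y{k-l}{0}{n} \to \RX{k}{l}{n}$, which preserves homological degree, cannot split it. The actual content of the paper's proof is precisely the identification of this nonzero connecting map $\Theta$: using the inductive hypothesis in the form \eqref{eqn:eqn4.1} to decompose the source of $\Theta$ as in \eqref{eqn:eqn4.2} and its target as in \eqref{eqn:eqn4.3}, one shows that $\Theta$ agrees with the connecting map $\theta$ of the complex long exact sequence \eqref{eqn:eqn3.1} on the summand $H_{\ast-(2i+2)n}\bigl(\R{\Ga{k}{2}-(i+1)}{n}\bigr)$ and vanishes on the complementary summand. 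It is exactly the nontriviality of $\theta$ that makes $H_\ast\bigl(\X{\Ga{k}{2}}{i+1}{n}\bigr)$ appear in the answer, rather than the strictly larger direct sum of shifted copies of the homology of the spaces $\R{\Ga{k}{2}-j}{n}$ that your splitting assumption would produce.
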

\begin{proof} We prove the proposition about $\RX{k}{l}{n}$ 
by induction with making $l$ larger. 
As in \fullref{lem:lem3.3}, there is a long exact sequence 
\begin{multline*}
\cdots  \longrightarrow  H_\ast \bigl(\RX{k}{l}{n}\bigr) \longrightarrow H_\ast 
\bigl(\RX{k}{l+1}{n}\bigr) \\
\longrightarrow  H_{\ast-(l+1)n} \bigl(\RX{k-(l+1)}{0}{n}\bigr)
\overset{\Theta}{\longrightarrow}
H_{\ast-1} \bigl(\RX{k}{l}{n}\bigr) \longrightarrow \cdots. 
\end{multline*}
This sequence is constructed from 
the following decomposition as sets 
$$\RX{k}{l+1}{n} -\RX{k}{l}{n} = \coprod_{a+2b= l+1}
\SP{a} (\Re) \times \SP{b} (H_+) \times \RX{k-(l+1)}{0}{n}$$ 
and the fact that $H_c^\ast (\SP{a} (\Re))=0$ for $a \geq 2$, 
where $H_c^\ast$ is the cohomology with compact 
supports. 

Assuming that the proposition holds for $l \leq 2i+1$, we determine 
$H_\ast \bigl(\RX{k}{2i+2}{n}\bigr)$. 
The homomorphism $\Theta$ is given as follows. 
Note that \fullref{thm:thmB} is equivalent to 
\begin{equation} \label{eqn:eqn4.1}
\RX{k}{2i}{n} \simeqs \X{\Ga{k}{2}}{i}{n} \vee 
\Sigma^{(2i+1)n-1} \bigl(\RX{k-2i-1}{0}{n} \vee S^0\bigr). 
\end{equation}
From inductive hypothesis, we have  
\begin{multline} \label{eqn:eqn4.2}
H_{\ast-(2i+2)n} \bigl(\RX{k-2i-2}{0}{n}\bigr) \cong \\
 H_{\ast-(2i+2)n} \Bigl( \R{\Ga{k}{2}-(i+1)}{n}\Bigr)
 \oplus \TH_{\ast-(2i+2)n} \bigl(\Sigma^{n-1} \RX{k-2i-3}{0}{n}
\vee S^{n-1}\bigr)
\end{multline}
and 
\begin{equation} \label{eqn:eqn4.3}
H_{\ast-1} (\RX{k}{2i+1}{n}) \cong 
H_{\ast-1} \Bigl(\X{\Ga{k}{2}}{i}{n}\Bigr). 
\end{equation}  
Recall the homomorphism $\theta$ in \eqref{eqn:eqn3.1}
with $i$ replaced by $i+1$. 
Then $\Theta \co \text{\eqref{eqn:eqn4.2}} \rightarrow \text{\eqref{eqn:eqn4.3}}$ 
is given by mapping 
the first summand by $\theta$ and the second summand by $0$. Hence, 
$H_\ast \bigl(\RX{k}{2i+2}{n}\bigr)$ is isomorphic to the homology of the 
right-hand side of \eqref{eqn:eqn4.1} with $i$ replaced by $i+1$. 

By a similar argument, we can determine $H_\ast \bigl(\RX{k}{2i+1}{n}\bigr)$ 
inductively by assuming the truth of the proposition for $l \leq 2i$. 
 This completes the proof of \fullref{prop:prop4.1}.  
\end{proof}

Finally, we construct an unstable map (resp. a stable map) from 
the right-hand side of \fullref{thm:thmA} (resp. \eqref{eqn:eqn4.1}) to 
$\RX{k}{2i+1}{n}$ (resp. $\RX{k}{2i}{n}$). 
First, the unstable map from the 
right-hand side of \fullref{thm:thmA} 
or the first stable summand in \eqref{eqn:eqn4.1} 
is essentially the inclusion
$$\X{q}{i}{n} \overset{\eta_{q, i, n}}{\longrightarrow}
\Y{2q}{2i}{n} \subset \RX{2q}{2i}{n},$$
where $\eta_{q, i, n}$ is defined in \eqref{eqn:eqn3.2}. 
Next, the stable map from the second 
stable summand in \eqref{eqn:eqn4.1} is constructed in the same way as in 
$g_{p, q, i, n}$ (see \eqref{eqn:eqn3.3}) using the fact that 
$\RX{2i+1}{2i}{n} \simeq S^{(2i+1)n-1}$ (see \fullref{eg:eg2.1} (iv)).  
This completes the proofs of \fullref{thm:thmA} and \fullref{thm:thmB}. \qed

\bibliographystyle{gtart}
\bibliography{link}

\end{document}